\newsavebox{\wwide}
\newcommand{\wwidehat}[1]{\sbox{\wwide}{$#1$}
\ifdim\wd\wwide < 1.1 em \widehat{#1} \else
\setlength
{\unitlength}{0.01\wd\wwide}\overset
{\begin{picture}(100,6)
\path(0,0)(50,6)(100,0)
\end{picture}}{#1}\fi}
\newcommand{\wwidetilde}[1]{\sbox{\wwide}{$#1$}
\ifdim\wd\wwide < 1.1 em \widetilde{#1} \else
\setlength
{\unitlength}{0.01\wd\wwide}\overset
{\begin{picture}(100,6)
\path(0,0)(33,6)(45,6)(55,0)(67,0)(100,6)
\end{picture}}{#1}\fi}
\@undefined\usepackage[usenames,dvips]{color}
\else\usepackage[usenames,dvipsnames]{color}
\definecolor{ChadDarkBlue}{rgb}{.1,0,.2}  
\definecolor{ChadBlue}{rgb}{.1,.1,.5}  
\definecolor{ChadRoyal}{rgb}{.2,.2,.8}  
\definecolor{ChadGreen}{rgb}{0,.4,0}    
\definecolor{ChadRed}{rgb}{.5,0,.5}  
\def\smallskip{\vskip\smallskipamount}
\def\medskip{\vskip\medskipamount}
\def\bigskip{\vskip\bigskipamount}
\numberwithin{equation}{section}
\theoremstyle{plain}
\newtheorem{theorem}{Theorem}[section]
\newtheorem{Theorem}{Theorem}[section]
\newtheorem{proposition}[theorem]{Proposition}
\newtheorem{Proposition}[theorem]{Proposition}
\newtheorem{Corollary}[theorem]{Corollary}
\theoremstyle{definition}
\newtheorem{Definition}[theorem]{Definition}
\newtheorem{remark}[theorem]{Remark}
\DeclareTextCommand{\elqq}{T1}{\leavevmode\char16\nobreak\hskip0pt}
\DeclareTextCommand{\erqq}{T1}{{\edef\@SF{\spacefactor\the\spacefactor}%
\nobreak\char17\@SF\relax}}
\newcommand{\uv}[1]{``{#1}"}
\newcommand{\lcr}{\Mapsto}
\newcommand{\eqcr}{\Mapsfrom\hskip-0.1521em\Mapsto}
\newcommand\ekviv{{\ \Longleftrightarrow\ }}
\newcounter{ok}
{\end{list}}
\newcounter{aok}
{\end{list}}
\def\go#1;#2;#3 {\vbox to0pt{\kern-#3\hbox{\kern#2 #1}\vss}\nointerlineskip}
  \newcommand{\myneco}[1]{\textcolor[rgb]{1.00,0.20,0.70}{#1}}
\begin{document}
\title{The groupoid-based logic for lattice effect algebras}
\author{\IEEEauthorblockN{Ivan~Chajda}
\IEEEauthorblockA{Department of Algebra and Geometry  \\
                              Faculty of Science,  Palack\'y University Olomouc\\
                             17. listopadu 12,  CZ-771 46 Olomouc, Czech Republic\\
        E-mail: ivan.chajda@upol.cz}\and
\and 
\IEEEauthorblockN{Helmut L\"anger}
\IEEEauthorblockA{Institute of Discrete Mathematics and Geometry\\
		Faculty of Mathematics and Geoinformation,  TU Wien\\
		{Wiedner Hauptstra\ss e 8-10}, 1040 Vienna, Austria\\
        E-mail: helmut.laenger@tuwien.ac.at}
\and 
\IEEEauthorblockN{Jan~Paseka}
\IEEEauthorblockA{Department of Mathematics and Statistics\\
		Faculty of Science, Masaryk University\\
		{Kotl\'a\v r{}sk\' a\ 2}, CZ-611~37~Brno, Czech Republic\\
        E-mail: paseka@math.muni.cz}}



\markboth{Groupoid-based logic for lattice effect algebras}%
{Groupoid-based logic for lattice effect algebras}
%

%




\IEEEcompsoctitleabstractindextext{%
\begin{abstract}
The aim of the paper is to establish a certain logic corresponding to lattice effect algebras. 
First, we answer a natural question whether  a lattice effect algebra can be represented 
by means of a groupoid-like structure. We establish a one-to-one correspondence 
between lattice effect algebras and certain groupoids with an antitone involution. 
Using these groupoids, we are able to introduce a suitable logic for lattice effect algebras.

\end{abstract}

\begin{IEEEkeywords}  D-poset, effect algebra, lattice effect algebra,  antitone involution, effect groupoid, groupoid-based logic.
\end{IEEEkeywords}}

\maketitle  

\IEEEdisplaynotcompsoctitleabstractindextext

%

{\section*{Introduction}}

\label{intro}
\IEEEPARstart{T}{wo} equivalent quantum structures, D-posets and effect algebras were
introduced in the nineties of the twentieth century. These
were considered as "unsharp" generalizations of the
structures which arise in quantum mechanics, in particular, of orthomodular
lattices and MV-algebras. Effect algebras aim to
describe "unsharp" event structures in quantum mechanics
in the language of algebra.

Effect algebras are fundamental in investigations of fuzzy probability
theory too. In the fuzzy probability frame, the elements of an effect algebra
represent fuzzy events which are used to construct fuzzy random variables.

Effect algebras were introduced by Foulis and Bennett (\cite{FoBe}) and D-posets 
by Chovanec and  K\^o{}pka (\cite{dpos}). 
Although the definition of an effect algebra looks elementary, these algebras have 
several very surprising properties. Concerning these properties the reader is referred to 
the monograph \cite{dvurec}  by Dvure\v censkij and Pulmannov\'a. In particular, 
every effect algebra induces a natural partial order relation and thus can be considered 
as a bounded poset. If this poset is a lattice, the effect algebra is called a 
{\em lattice effect algebra}. A representation of lattice effect algebras by means 
of so-called basic algebras was derived in \cite{CHK}.

Since effect algebras describe quantum effects and are determined by behaviour 
of bounded self-adjoint operators on the Hilbert space of the corresponding 
physical system, we hope that a logic which should be reached by means 
of these algebras will enable us a better understanding of the logic of quantum mechanics.

The aim of the paper is to establish a certain logic corresponding to lattice effect algebras. 
By a logic we mean here a set of formulas in the language of lattice effect algebras enriched 
by logical connectives with a finite set of derivation rules. It can be noticed that for basic algebras 
the same task was solved in \cite{Ch} and for the so-called 
dynamic De Morgan algebras in \cite{TODDMA2,TODDMA3}. 
Since effect algebras are only partial algebras, it looks as an advantage to use another 
algebraic structure which has everywhere defined operations and which is in a one-to-one 
correspondence with the given effect algebra ${\mathbf E}$. 
If ${\mathbf E}$ is lattice ordered, this is possible 
and the corresponding structure can be e.g. the so-called effect near 
semiring, see e.g. \cite{CL}  
for details. However, we can derive another algebra which has only one binary operation, 
i.e., a groupoid enriched by unary and nullary operations. Such approach enables us 
to reduce the set of formulas and the set of derivation rules. This is our aim in the first part 
of the paper. Using this groupoid which is called {\em  effect groupoid}, we are able to introduce 
a suitable logic for lattice effect algebras which is provided in the second part.


\medskip

\section{{Preliminaries} and basic facts}
\label{prelim}

We refer the reader to \cite{Balbes} 
for standard definitions and notations for lattice structures. 

We start with the definition of an effect algebra.

\begin{Definition}\label{defea}
An {\em effect algebra} is a partial algebra $\mathbf E=(E;\oplus,0,1)$ 
of type $(2,0,0)$ satisfying conditions {\rm(E1)} -- {\rm(E4)} for all $x,y,z\in E$:
\begin{enumerate}[\hspace{6pt}(E1)]
\item[{\rm(E1)}] If $x\oplus y$ exists, so does $y\oplus x$ and $x\oplus y=y\oplus x$;
\item[{\rm(E2)}] if $x\oplus y$ and $(x\oplus y)\oplus z$ exist, so do $y\oplus z$ and $x\oplus(y\oplus z)$ and $(x\oplus y)\oplus z=x\oplus(y\oplus z)$;
\item[{\rm(E3)}] there exists a unique $x'\in E$ such that $x\oplus x'$ is defined and $x\oplus x'=1$;
\item[{\rm(E4)}] If $x\oplus1$ exists then $x=0$.
\end{enumerate}
\end{Definition}

Since $'$ is a unary operation on $E$ it can be regarded as a further fundamental operation. 
Hence in the following we will write $\mathbf E=(E;\oplus,',0,1)$ instead of $\mathbf E=(E;\oplus,0,1)$.

Let $\mathbf E=(E;\oplus,',0,1)$ be an effect algebra and $a,b\in E$. The following facts are well-known:

\noindent{}(F1): By defining $a\leq b$ if there exists some $c\in E$ such 
that $a\oplus c$ 
exists and $a\oplus c=b$, $(E,\leq,',0,1)$ becomes a bounded poset with an antitone involution. 
We call $\leq$ the {\em induced order} of $\mathbf E$. 
 Recall that the element $c$ is unique, if it exists. Then $c$ is equal to $(a \oplus b')'$ 
and it is denoted by $b \ominus a$.
$\mathbf E$ is called a {\em lattice effect algebra} if $(E,\leq)$ is a lattice.

\noindent{}(F2): $a\oplus b$ exists if and only if $a\leq b'$.

\noindent{}(F3): $a\oplus0$ and $0\oplus a$ exist and $a\oplus0=0\oplus a=a$.

\noindent{}(F4): $(a')'=a$.

We recall Proposition~1.8.6 from \cite{dvurec}:

\begin{Proposition}\label{prop1}
Let $\mathbf E=(E;\oplus,',0,1)$ be a lattice effect algebra, $\vee$ and $\wedge$ denote its lattice operations and $a,b,c\in E$. If $a\oplus c$ and $b\oplus c$ exist then $(a\wedge b)\oplus c=(a\oplus c)\wedge(b\oplus c)$.
\end{Proposition}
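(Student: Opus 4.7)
The plan is to reduce the identity to two inequalities, using only the standard facts (F1)--(F4), the axioms (E1)--(E3), and the well-known cancellation law for $\oplus$ (if $c\oplus x$ and $c\oplus y$ both exist and are equal, then $x=y$), which is an immediate consequence of (E2) and (E3).

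First I would verify existence of the left-hand side. Because $a\oplus c$ and $b\oplus c$ exist, (F2) gives $a\le c'$ and $b\le c'$, so $a\wedge b\le c'$, and hence by (F2) again the sum $(a\wedge b)\oplus c$ exists.

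Next I would establish the easy inequality $(a\wedge b)\oplus c\le(a\oplus c)\wedge(b\oplus c)$. From $a\wedge b\le a$ and $a\wedge b\le b$, the monotonicity of $\oplus$ in the first argument (which follows from (E2): if $x\le y$ via $x\oplus d=y$ and $y\oplus c$ exists, then $c\oplus d$ and $(x\oplus c)\oplus d=y\oplus c$ exist) yields $(a\wedge b)\oplus c\le a\oplus c$ and $(a\wedge b)\oplus c\le b\oplus c$, and taking the meet gives the claim.

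For the reverse inequality, set $d=(a\oplus c)\wedge(b\oplus c)$. Since $c\le a\oplus c$ and $c\le b\oplus c$, we have $c\le d$, so there exists $e\in E$ with $c\oplus e=d$. From $c\oplus e=d\le a\oplus c=c\oplus a$, I would argue that $e\le a$: the inequality $c\oplus e\le c\oplus a$ yields an $f$ with $(c\oplus e)\oplus f=c\oplus a$, (E2) then gives $e\oplus f$ existing and $c\oplus(e\oplus f)=c\oplus a$, and cancellation forces $e\oplus f=a$, whence $e\le a$. The same argument with $b$ in place of $a$ gives $e\le b$, so $e\le a\wedge b$, and therefore
\[
d=c\oplus e\le c\oplus(a\wedge b)=(a\wedge b)\oplus c,
\]
again using monotonicity.

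The only technical step is the left-cancellation of $\oplus$; this is the main obstacle in spirit but is completely standard, deducible from (E2) combined with the uniqueness clause of (E3) (apply $(\cdot)'$ to $c\oplus x=c\oplus y$ and use that the orthosupplement is unique). Once that is in hand, the two inequalities combine to the desired equality.
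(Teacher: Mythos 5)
Your argument is correct and complete. Note, however, that the paper itself offers no proof of this statement: it simply recalls it as Proposition~1.8.6 of the Dvure\v censkij--Pulmannov\'a monograph, so there is no in-paper argument to compare against. Your proof is the standard one and all the ingredients check out: existence of $(a\wedge b)\oplus c$ follows from (F2) since $a\wedge b\le a\le c'$; the inequality $(a\wedge b)\oplus c\le(a\oplus c)\wedge(b\oplus c)$ follows from monotonicity of $\oplus$ (which you correctly derive from (E1)/(E2)); and the reverse inequality uses the cancellation law, which is indeed a routine consequence of (E1), (E2), the uniqueness clause of (E3), and (F4) (from $c\oplus x=c\oplus y$ one gets $x\oplus d=y\oplus d=c'$ with $d=(c\oplus x)'$, then a second application yields $x'=y'$ and hence $x=y$). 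The step deducing $e\le a$ from $c\oplus e\le c\oplus a$ is exactly the right use of (E2) plus cancellation. In short: a sound, self-contained proof of a result the paper leaves to the literature.
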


The following concepts were introduced in \cite{dvurec}.

\begin{Definition}\label{mvef} 
A {\em lattice orthoalgebra} is a lattice effect algebra $\mathbf E=(E,\oplus,',0,1)$ satisfying condition {\hspace{1pt}\rm(E5)} for all $x\in E$:

\noindent{}{\hspace{0pt}\rm (E5)} If $x\oplus x$ exists then $x=0$.

An {\em {\rm MV}-effect algebra} 
is a lattice effect algebra $\mathbf E$ such that 
$(x\wedge y')\oplus y=(y\wedge x')\oplus x$ for all $x,y\in E$.
\end{Definition}

Recall that Rie\v canov\'a \mbox{(see \cite{Riecanova})} showed that every lattice effect algebra is the set-theoretic union of maximal subalgebras which are {\rm MV}-effect algebras, so-called {\em blocks}, and therefore is itself an {\rm MV}-effect algebra if and only if it consists of one block only.

\begin{Definition}\label{def2}\label{def1}
An {\em  effect groupoid} is an algebra $\mathbf R=(R;\cdot,',0,1)$ 
of type $(2,1,0,0)$ satisfying conditions {\rm(NG0)} -- {\rm(NG8)}:

\begin{enumerate}[\hspace{1pt}(NG1)]
\item[{\rm(NG0)}] $(R,\cdot,1)$ is a groupoid with unit $1$;
\item[{\rm(NG1)}] $x=x''$;
\item[{\rm(NG2)}]  $x\cdot 0=0\cdot x=0$; 
\item[{\rm(NG3)}] $0'=1$;
\item[{\rm(NG4)}] $x\cdot (y\cdot x')=0= (y\cdot x')\cdot x$; 
\item[{\rm(NG5)}] $x\cdot y=  y\cdot [(y'\cdot x')'\cdot x']'$;
\item[{\rm(NG6)}] $x\cdot (y'\cdot x)'=(y'\cdot x)'\cdot x=(x'\cdot y)'\cdot y$;
\item[{\rm(NG7)}]  $[(x\cdot y')'\cdot y']'\cdot z=%
[((x\cdot z)\cdot (y\cdot z)')'\cdot (y\cdot z)']'$;
\item[{\rm(NG8)}]  If $x'\cdot y'=0$ and $(x\cdot y)'\cdot z'=0$ then 
$y'\cdot z'=0$, $x'\cdot (y\cdot z)'=0$ and $(x\cdot y)\cdot z=x\cdot (y\cdot z)$.
\end{enumerate}
A {\em sub-effect groupoid of\/ $\mathbf R$} is a subset $Q\subseteq R$ such that 
$0, 1\in Q$ and $a, b\in Q$ implies $a\cdot b\in Q$ and $a'\in Q$. 
\end{Definition}

The following theorem shows that to every lattice effect algebra there can be assigned an effect  groupoid in some natural way. 

\begin{Theorem}\label{mainleatoneg}
Let $\mathbf E=(E;\oplus,',0,1)$ be a lattice effect algebra with lattice operations $\vee$ and $\wedge$ and put
\[
x\cdot y:=((x'\wedge y)\oplus y')'
\]
for all $x,y\in E$. Then $x\cdot y$ is well-defined because of $x'\wedge y\leq y$ and, moreover, 
$\mathbb R(\mathbf E):=(E;\cdot,',0,1)$ is an effect groupoid.
\end{Theorem}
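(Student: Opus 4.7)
The plan is to verify the axioms (NG0)--(NG8) in sequence, guided by the following reformulation: since $x'\wedge y\le y$, the sum $(x'\wedge y)\oplus y'$ is well defined and
\[
x\cdot y \;=\; y\ominus(y\wedge x'),
\]
where $\ominus$ is the partial difference of the lattice effect algebra. Axioms (NG0)--(NG3) are immediate unit/zero computations from (F3), (F4), $x'\wedge 0=0$ and $0'\wedge x=x$. For (NG4), one notes that $y\cdot x'\le x'$ (since $(y'\wedge x')\oplus x\ge x$), so $x'\wedge(y\cdot x')=y\cdot x'$ while $(y\cdot x')'\wedge x=x$; both products then collapse to $(u\oplus u')'=1'=0$.

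The computational engine is the residuation identity
\[
(u'\cdot v)'\cdot v \;=\; u\wedge v,
\]
obtained from $u'\cdot v = v\ominus(v\wedge u)$ together with the standard fact $v\ominus(v\ominus w)=w$ for $w\le v$. In its two symmetric forms it gives two of the three equalities in (NG6); the third, $x\cdot(y'\cdot x)'=x\wedge y$, follows by a direct computation exploiting $(y'\cdot x)'=(y\wedge x)\oplus x'\ge x'$. For (NG5), the residuation identity yields $(y'\cdot x')'\cdot x'=y\wedge x'$, whereupon the right-hand side simplifies to $y\cdot(y\wedge x')'$; expanding this by the defining formula and using the lattice absorption $y'\wedge(y'\vee x)=y'$ produces exactly $((x'\wedge y)\oplus y')'=x\cdot y$.

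For (NG7), the residuation identity is applied twice, inside each bracket, reducing the axiom to the distributivity
\[
(x\vee y)\cdot z \;=\; (x\cdot z)\vee(y\cdot z).
\]
Via the $\ominus$-reformulation, both sides become $z\ominus(z\wedge x'\wedge y')$ and $(z\ominus(z\wedge x'))\vee(z\ominus(z\wedge y'))$ respectively, and their equality follows from Proposition~\ref{prop1} (taken with $a=z\wedge x'$, $b=z\wedge y'$, $c=z'$) by passing to orthocomplements in $(a\wedge b)\oplus z'=(a\oplus z')\wedge(b\oplus z')$.

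The principal obstacle is (NG8), whose subtlety lies entirely in controlling partial existence. The hypothesis $x'\cdot y'=0$ is equivalent to $x'\le y$, so $x\cdot y$ simplifies to $y\ominus x'$, and $(x\cdot y)'\cdot z'=0$ then unfolds into $x'\le z$ together with $x'\oplus z'\le y$. Antitonicity of ${}'$ converts the latter into $y'\oplus x'\le z$, from which $z'\le z'\oplus x'\le y$ gives $y'\cdot z'=0$, and $(y\cdot z)'=y'\oplus z'\le x$ gives $x'\cdot(y\cdot z)'=0$. Under these restrictions both $(x\cdot y)\cdot z$ and $x\cdot(y\cdot z)$ reduce to $z\ominus(x'\oplus y')$ and $(z\ominus y')\ominus x'$ respectively, and their equality is precisely the associativity (E2) of $\oplus$ rephrased in the $\ominus$-language. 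The main difficulty throughout is not any single identity but the bookkeeping of when $\oplus$ is defined, which the rewrite $x\cdot y=y\ominus(y\wedge x')$ is designed to render manageable.
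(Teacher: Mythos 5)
Your proposal is correct, and for most of the axioms it runs along the same lines as the paper's proof: the identity $(u'\cdot v)'\cdot v=u\wedge v$ (your ``residuation identity'', which the paper also establishes via $(a'\cdot b)'=(a\wedge b)\oplus b'$), the equivalence $a\cdot b'=0\Leftrightarrow a\le b$, Proposition~\ref{prop1} for (NG7), and (E2) for (NG8) all appear in the same roles. The one place where you genuinely diverge is (NG6). For the equality $x\cdot(y'\cdot x)'=(y'\cdot x)'\cdot x$ the paper observes that $y'\cdot x\le x$, invokes Rie\v canov\'a's theorem to place $x$ and $(y'\cdot x)'$ in a common MV-block, and then applies the MV-effect-algebra identity inside that block. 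You instead compute directly: since $(y'\cdot x)'=(y\wedge x)\oplus x'\ge x'$, the meet $x'\wedge(y'\cdot x)'$ collapses to $x'$, and the product reduces to the cancellation $((y\wedge x)\oplus x')\ominus x'=y\wedge x$. This is a real simplification -- it removes the only appeal to a nontrivial structural theorem from the verification and keeps the whole argument at the level of elementary effect-algebra identities -- while the paper's block argument has the (minor) virtue of displaying (NG6) as a trace of MV-commutativity on compatible pairs, which foreshadows Corollaries~\ref{cor1} and~\ref{cor1x}. Everything else in your write-up, including the existence bookkeeping in (NG8) via $x\cdot y=y\ominus(y\wedge x')$, matches the paper's reasoning in substance.
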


\begin{proof}
Let $a, b\in E$. Since  $(E,\leq,',0,1)$ is a bounded poset with 
an antitone involution we have $a''=a$,  $a\leq b$ implies $b'\leq a'$, and $0'=1$. Moreover, 
$$
\begin{array}{@{}l@{\,}c@{\,}l}
a\cdot 1 & = & ((a'\wedge1)\oplus1')'=(a'\oplus0)'=(a')'=a\text{ and} \\
1\cdot a & = & ((1'\wedge a)\oplus a')'=((0\wedge a)\oplus a')'\\
&=&(0\oplus a')'=(a')'=a.
\end{array}
$$
Hence $(E,\cdot,1)$ is a groupoid with neutral element. Moreover, 
\begin{eqnarray*}
a\cdot 0 & = & ((a'\wedge0)\oplus0')'=(0\oplus1)'=1'=0\text{ and} \\
0\cdot a & = & ((0'\wedge a)\oplus a')'=(a\oplus a')'=1'=0.
\end{eqnarray*}
Also, $a\leq b$ implies $b\ominus a=(b'\oplus a)'=((b'\wedge a')\oplus a)'=b\cdot a'$. 
By (E3) we have that $a'\wedge b'=b'$ implies $b\oplus(a'\wedge b')=b\oplus b'=1$ 
and, conversely, if $b\oplus(a'\wedge b')=1$ then $a'\wedge b'=b'$. Hence the following (denoted by ($\dagger$)) are equivalent: 
\begin{eqnarray*}
a\leq b, b'\leq a', a'\wedge b'=b', b\oplus(a'\wedge b')=1,\\
(a'\wedge b')\oplus b=1, ((a'\wedge b')\oplus b)'=0, a\cdot b'=0.
\end{eqnarray*}

Let $a, b\in E$.  Then $a\wedge b=(a'\cdot b)'\cdot b$. Namely, from the definition of $\cdot$ we obtain that 
$(a'\cdot b)'=(a\wedge b)\oplus b'$. It follows that $a\wedge b=(a'\cdot b)'\ominus b'=(a'\cdot b)'\cdot b$.

Altogether, the conditions  (NG0) -- (NG3) of Definition~\ref{def1} are valid.
Now it remains to prove  the conditions   (NG4) -- (NG8).

\noindent{}(NG4): We have $a\leq a\oplus(b'\wedge a')=(b'\wedge a')\oplus a=(b\cdot a')'$ and hence $a\cdot (b\cdot a')=0$ 
according to  ($\dagger$). Moreover, 
$(b\cdot a') \cdot a= ( (b\cdot a')'\wedge a\oplus a')'=(a\oplus a')'=1'=0$. 

\noindent{}(NG5): We know that $b'\leq a\vee b'= (a'\wedge b)'$. In this case 
$a\cdot b=((a'\wedge b)\oplus b')'=(b'\oplus (a'\wedge b)'')'=%
(b'\wedge (a'\wedge b)'\oplus (a'\wedge b)'')'=b\cdot [(a'\wedge b)']=b\cdot [(b\wedge a')']%
=b\cdot [(b'\cdot a')'\cdot a']'$. 

\noindent{}(NG6): Since  $a\wedge b=(a'\cdot b)'\cdot b$ we have from the commutativity 
of $\wedge$ that  $(b'\cdot a)'\cdot a=(a'\cdot b)'\cdot b$. From the fact that 
$b'\cdot a=((b\wedge a)\oplus a')'\leq a$ we know that 
$a$ and $(b'\cdot a)'$ are both in some block 
of the lattice effect algebra $\mathbf E$ (see \cite{Riecanova}) and hence 
$[(b'\cdot a)'\cdot a]'=((b'\cdot a)\wedge a)\oplus  a'=%
 a'\wedge (b'\cdot a)'\oplus  (b'\cdot a)=%
[a \cdot (b'\cdot a)']'$. 

\noindent{}(NG7): Using Proposition~\ref{prop1} we have   
$[(a\cdot b')'\cdot b']'\cdot c= (((a'\wedge b')\wedge c)\oplus c')'%
=((a'\wedge b'\wedge c)\oplus c')'%
=(((a'\wedge c)\wedge(b'\wedge c))\oplus c')'=%
[((a'\wedge c)\oplus c')\wedge((b'\wedge c)\oplus c')]'%
=[((a\cdot c)\cdot (b\cdot c)')'\cdot (b\cdot c)']'$.

\noindent{}(NG8): Assume $a'\cdot b'=0$ and $(a\cdot b)'\cdot c'=0$. Then there exists $a'\oplus b'$, $a\cdot b=(a'\oplus b')'$ and there exists $(a'\oplus b')\oplus c'$. Hence there exist $b'\oplus c'$ and $a'\oplus(b'\oplus c')$ and $(a'\oplus b')\oplus c'=a'\oplus(b'\oplus c')$. This shows $b'\leq c$, $b'\cdot c'=0$, 
$(b'\oplus c')'=b\cdot c$ and $a'\cdot (b\cdot c)'=0$. Therefore
\smallskip
$
\begin{array}{@{}r@{\, }c@{\, }l}
(a\cdot b)\cdot c&=&((a'\oplus b')\oplus c')'\\
&=&(a'\oplus(b'\oplus c'))'=a\cdot (b\cdot c).
\end{array}
$\end{proof}

Now we show that to every effect  groupoid we can assign a lattice effect algebra in some natural way.

\begin{Theorem}\label{th2}
Let $\mathbf R=(R;\cdot,',0,1)$ be an effect  groupoid
and for $x,y\in R$ put
\[
x\oplus y:=(x'\cdot y')',\text{ provided }x\cdot y=0. 
\]
Then $\mathbb E(\mathbf R):=(R;\oplus,',0,1)$ is a lattice effect algebra.
\end{Theorem}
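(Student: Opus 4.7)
The plan is to verify the four effect-algebra axioms (E1)--(E4) for $\mathbb E(\mathbf R)$ and then to show that the induced order is a lattice. I will treat the easy axioms first, isolate the critical identity that drives (E1), and finally outline the lattice argument.

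Axiom (E4) is immediate: $x\oplus 1$ exists iff $x\cdot 1=0$, but $x\cdot 1=x$ by (NG0). For the existence part of (E3), applying (NG4) with $y:=1$ together with (NG0) yields $x\cdot x'=0$, whence $x\oplus x'=(x'\cdot x'')'=(x'\cdot x)'=0'=1$ by (NG1) and (NG3). Axiom (E2) translates directly into (NG8) via the substitution $u:=x'$, $v:=y'$, $w:=z'$: the hypotheses that $u\oplus v$ and $(u\oplus v)\oplus w$ exist become $x'\cdot y'=0$ and $(x\cdot y)'\cdot z'=0$ (since $u\oplus v=(x\cdot y)'$), and the three conclusions of (NG8) translate into the existence of $v\oplus w$, the existence of $u\oplus(v\oplus w)$, and the associativity equality.

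The key point is (E1), which amounts to the quasi-commutativity $x\cdot y=0\Longrightarrow y\cdot x=0$ together with $(x'\cdot y')'=(y'\cdot x')'$. Applying (NG6) with $x$ replaced by $x'$ gives $(y'\cdot x')'\cdot x'=(x\cdot y)'\cdot y$; under the assumption $x\cdot y=0$, the right-hand side collapses via (NG3) and (NG0) to $1\cdot y=y$, so $(y'\cdot x')'\cdot x'=y$. Now (NG4) applied with $a:=x$ and $b:=(y'\cdot x')'$ forces $\bigl((y'\cdot x')'\cdot x'\bigr)\cdot x=0$, that is, $y\cdot x=0$. For the equality, substitute $x\mapsto y'$ and $y\mapsto x'$ in (NG5) to obtain $y'\cdot x'=x'\cdot[(x\cdot y)'\cdot y]'$; under $x\cdot y=0$ the inner bracket collapses to $y$ (as before) and therefore to $y'$ after the outer complementation, so $y'\cdot x'=x'\cdot y'$. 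The uniqueness half of (E3) then follows from the uniqueness clause built into (E3) combined with the commutativity just established.

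With (E1)--(E4) in hand, the induced order $a\le b:\Leftrightarrow \exists c\colon a\oplus c=b$ is automatically a bounded partial order with antitone involution $'$, and a short calculation using (NG4) shows that it coincides with the relation $a\cdot b'=0$. To verify the lattice property I would exhibit the meet explicitly as $x\wedge y:=(x'\cdot y)'\cdot y$: by (NG6) this expression is symmetric in $x$ and $y$, and (NG4) applied in the same pattern as above yields $x\wedge y\le x$ and $x\wedge y\le y$. For the greatest-lower-bound property I would use (NG7) to combine two inequalities $z\le x$ and $z\le y$ into $z\le x\wedge y$; this is the most delicate part of the lattice step. The join then follows from the antitone involution as $x\vee y=(x'\wedge y')'$. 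The principal obstacle throughout is the quasi-commutativity underlying (E1); once it is secured, the remaining verifications are a systematic application of (NG0)--(NG8).
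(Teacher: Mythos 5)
Your overall route is the paper's own: (E4) from (NG0), (E2) as a direct transcription of (NG8) under the substitution $u=x'$, $v=y'$, $w=z'$, (E1) via the combination of (NG6) (to get $(y'\cdot x')'\cdot x'=y$ when $x\cdot y=0$), (NG4) (to get $y\cdot x=0$) and (NG5) (to get $y'\cdot x'=x'\cdot y'$), and finally the meet exhibited as $x\wedge y=(x'\cdot y)'\cdot y$ with the lower-bound facts from (NG4)/(NG6) and the greatest-lower-bound property from (NG7). All of these steps check out and coincide, substitution for substitution, with the published proof; the lattice part is only sketched, but the sketch is exactly the computation the paper carries out.

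There is, however, one genuine gap: the uniqueness half of (E3). You write that it ``follows from the uniqueness clause built into (E3) combined with the commutativity just established,'' but that is circular --- (E3) is precisely the axiom being verified for $\mathbb E(\mathbf R)$, so its uniqueness clause is not available as a hypothesis, and commutativity alone does not give it (uniqueness of complements is not a consequence of (E1), (E2), (E4) and the existence of \emph{some} complement). What must be shown is that $x\oplus y=1$ forces $y=x'$, and this requires another nontrivial use of (NG6): from $x\oplus y=1$ one gets $x\cdot y=0$, $y\cdot x=0$ and $x'\cdot y'=0$, and then
$x=x\cdot 1=x\cdot(y\cdot x)'=(x'\cdot y')'\cdot y'=0'\cdot y'=1\cdot y'=y'$,
whence $y=x'$. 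This is how the paper closes the axiom, and without some such argument your verification of (E3) is incomplete. (A very minor further point: in your existence computation $x\oplus x'=(x'\cdot x)'=0'=1$ you implicitly need $x'\cdot x=0$ as well as $x\cdot x'=0$; both instances come from (NG4) with $y:=1$, so this is cosmetic.)
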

\begin{proof}
Let $a,b,c\in R$.

\noindent{}(E1): Assume $a\oplus b$ exists. Then $a\cdot b=0$ according to the definition of $\oplus$ and hence $b'\cdot a' =a'\cdot [(a\cdot b)'\cdot b]'=%
a'\cdot [0'\cdot b]'=a'\cdot [1\cdot b]'=a'\cdot b'$ according to (NG5), (NG3) and 
(NG0). Moreover, $b=1\cdot b= (a\cdot b)'\cdot b=(b'\cdot a')'\cdot a'$ by (NG6). 
Hence  $b\cdot a= ((b'\cdot a')'\cdot a')\cdot a=0$ by (NG4). It follows that 
$b\oplus a$ exists and $a\oplus b=b\oplus a$.

\noindent{}(E2): Assume $a\oplus b$ and $(a\oplus b)\oplus c$ exist. Then $a\cdot b=0$ and $(a'\cdot b')'\cdot c=0$ according to  the definition of $\oplus$. Hence 
$b\cdot c=0$, $a\cdot (b'\cdot c')'=0$ and 
$(a'\cdot b')\cdot c'=a'\cdot (b'\cdot c')$ according to (NG8). 
This finally implies that there exist $b\oplus c$ and $a\oplus(b\oplus c)$ and
\[
(a\oplus b)\oplus c=((a'b')c')'=(a'(b'c'))'=a\oplus(b\oplus c).
\]
\noindent{}(E3): If $a\oplus b$ exists and $a\oplus b=1$ then $a\cdot b=0$, i.e., 
$b\cdot a=0$ according to (E1), 
and $(a'\cdot b')'=1$ according to the definition of $\oplus$ and 
hence $a'\cdot b'=0$. It follows 
$a=a\cdot 1= a\cdot (b\cdot a)'= (a'\cdot b')'\cdot b'=1\cdot b'=b'$ 
 according to (NG0) and (NG6).  On the other hand, $a\cdot a'=0$ 
 and $a'\cdot a=0$ by (NG4). Hence $a\oplus a'$ exists and  
 therefore $a\oplus a'=(a'\cdot a)'=0'=1$ according to (NG3). 
 On that matter $a\oplus b=1$ if and only if $b=a'$.

\noindent{}(E4): If $a\oplus 1$ exists then, using (NG0), $a=a\cdot 1=0$ and hence $a=0$.

Hence $\mathbb E(\mathbf R)$ is an effect algebra. Let $\leq$ denote 
its induced order. Then $a\leq b'$ if and only if $a\oplus b$ exists if and only 
if $a\cdot b=0$. It is enough to check that the operation $\wedge$ 
defined by $a\wedge b=(a'\cdot b)'\cdot b$ is a meet with respect to $\leq$. 
From (NG4) we obtain that $((a'\cdot b)'\cdot b)\cdot b'=0$. Hence 
$a\wedge b\leq b$. Since also  $a\wedge b=(b'\cdot a)'\cdot a$ by (NG6) 
we obtain $a\wedge b\leq a$. Let $x\in R$, $x\leq a$ and $x\leq b$. 
Then $a'\cdot x=0=b'\cdot x$. It follows by (NG7) that $(a\wedge b)'\cdot x=%
[(a'\cdot b'')'\cdot b'']'\cdot c=%
[((a'\cdot c)\cdot (b'\cdot c)')'\cdot (b'\cdot c)']'=[(0\cdot 0')'\cdot 0']'= 
[0'\cdot 0']'=1'=0$. Therefore $x\leq a\wedge b$ .
\end{proof}

Next we show that the described correspondence between lattice effect algebras and effect groupoids is one-to-one.

\begin{Theorem}
Let $\mathbf E=(E;\oplus,',0,1)$ be a lattice effect algebra. Then $\mathbb E(\mathbb R(\mathbf E))=\mathbf E$.
\end{Theorem}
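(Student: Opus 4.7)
The plan is to verify that the derived operation $\oplus^{*}$ on $\mathbb{E}(\mathbb{R}(\mathbf E))$ coincides, both in domain and in values, with the original operation $\oplus$ on $\mathbf E$. The unary operation, the constants $0,1$, and the carrier set $E$ are literally the same in $\mathbf E$ and in $\mathbb{E}(\mathbb{R}(\mathbf E))$, so the only thing to check is the partial binary operation.

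First I would settle the domain question. By definition, $x \oplus^{*} y$ exists in $\mathbb{E}(\mathbb{R}(\mathbf E))$ exactly when $x \cdot y = 0$ in $\mathbb R(\mathbf E)$. But the equivalences labelled $(\dagger)$ in the proof of Theorem~\ref{mainleatoneg} give $a \leq b \iff a \cdot b' = 0$, so in particular $x \cdot y = 0$ is equivalent to $x \leq y'$. By fact (F2), this is precisely the condition that $x \oplus y$ exists in $\mathbf E$. Hence the two partial operations have the same domain of definition.

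Next I would compute the values. Assume $x \oplus y$ exists, i.e.\ $x \leq y'$, equivalently $x \wedge y' = x$. Unwinding the definition of $\cdot$ in $\mathbb R(\mathbf E)$ and using (F4),
\[
x' \cdot y' \;=\; \bigl((x'' \wedge y') \oplus y''\bigr)' \;=\; \bigl((x \wedge y') \oplus y\bigr)' \;=\; (x \oplus y)'.
\]
Applying $'$ once more and using (F4) gives $x \oplus^{*} y = (x' \cdot y')' = x \oplus y$, as required.

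There is no real obstacle here; the statement is essentially a bookkeeping check. The only subtlety worth flagging is that one must invoke $(\dagger)$ (which in turn rests on axiom (E3)) to identify the domain of $\oplus^{*}$ with that of $\oplus$, and then the assumption $x \leq y'$ is used crucially to simplify $x \wedge y'$ to $x$ in the computation above.
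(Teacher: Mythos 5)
Your proposal is correct and follows essentially the same route as the paper: identify the domains via the equivalence $x\cdot y=0 \iff x\le y'$ (the $(\dagger)$ chain together with (F2)), then compute $(x'\cdot y')'=(x\wedge y')\oplus y=x\oplus y$ using $x\wedge y'=x$. The paper's proof is just a more compressed version of the same bookkeeping.
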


\begin{proof}
Let $\mathbb R(\mathbf E)=(E;\cdot,',0,1)$, 
$\mathbb E(\mathbb R(\mathbf E))=(E;\oplus_1,',0,1)$ and $a,b\in E$. 
Then the following are equivalent: $a\oplus_1b$ exists, $a\cdot b=0$, $a\oplus b$ exists. 
If this is the case then $a\oplus_1 b=(a'\cdot b')'=(a\wedge b')\oplus b=a\oplus b$.
\end{proof}

\begin{Theorem}\label{th1}
Let $\mathbf R=(R; \cdot,',0,1)$ be an effect groupoid. Then $\mathbb R(\mathbb E(\mathbf R))=\mathbf R$.
\end{Theorem}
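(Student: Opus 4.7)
The plan is to show that $\mathbb E$ and $\mathbb R$ are mutual inverses. Write $\mathbb E(\mathbf R)=(R;\oplus,',0,1)$ and $\mathbb R(\mathbb E(\mathbf R))=(R;\cdot_1,',0,1)$. The underlying set, the involution $'$ and the constants $0,1$ are literally preserved through both constructions, so the whole claim reduces to verifying the identity $x\cdot_1 y = x\cdot y$ for all $x,y\in R$.

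I would first quote two facts effectively established inside the proof of Theorem~\ref{th2}: (i) the meet in $\mathbb E(\mathbf R)$ is given by $a\wedge b=(a'\cdot b)'\cdot b$; and (ii) in the induced order one has $a\le b$ iff $a\cdot b'=0$ (combining (F2) with (NG1)). Substituting $a=x'$ in (i) and using (NG1) gives the crucial identity
$$x'\wedge y=(x\cdot y)'\cdot y.$$
Moreover, substituting $y'$ for $x$ and $x$ for $y$ in (NG4) and invoking (NG1) yields $(x\cdot y)\cdot y'=0$, so $x\cdot y\le y$ by (ii).

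Next, I unfold $x\cdot_1 y$ via the defining formula of $\mathbb R$ from Theorem~\ref{mainleatoneg}:
$$x\cdot_1 y=\bigl((x'\wedge y)\oplus y'\bigr)'=\bigl(((x\cdot y)'\cdot y)\oplus y'\bigr)'.$$
Before applying the definition $a\oplus b=(a'\cdot b')'$ I must check that this $\oplus$ is actually defined, i.e.\ that $((x\cdot y)'\cdot y)\cdot y'=0$; but this is yet another instance of (NG4) (take $y'$ for $x$ and $(x\cdot y)'$ for $y$). Once this is verified, using (NG1) again the expression collapses to
$$x\cdot_1 y=((x\cdot y)'\cdot y)'\cdot y.$$

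Finally, applying (i) with $a:=x\cdot y$ and $b:=y$ recognises the right-hand side as $(x\cdot y)\wedge y$. Because $x\cdot y\le y$, this meet equals $x\cdot y$, and hence $x\cdot_1 y=x\cdot y$, as required. The argument involves no deep step beyond (NG4) and the meet formula obtained inside Theorem~\ref{th2}; the main obstacle is purely bookkeeping, namely tracking the many double primes and recognising each small identity as an appropriate instance of an axiom.
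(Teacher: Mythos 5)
Your argument is correct, and everything you quote is genuinely available: the meet formula $a\wedge b=(a'\cdot b)'\cdot b$ and the equivalence $a\le b\Leftrightarrow a\cdot b'=0$ are both established inside the proof of Theorem~\ref{th2}, and the two facts you need from (NG4) --- that $((x\cdot y)'\cdot y)\cdot y'=0$ and that $(x\cdot y)\cdot y'=0$ --- are exactly the instances you describe. Your opening moves coincide with the paper's: both proofs reduce the claim to $x\cdot_1 y=x\cdot y$, unfold $x\cdot_1 y=((x'\wedge y)\oplus y')'$, and substitute the meet formula to reach $((x\cdot y)'\cdot y)'\cdot y$. You diverge at the final collapse. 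The paper stays purely equational: it applies (NG7) with $z=y$, turning the expression into $[((x\cdot y)\cdot(y'\cdot y)')'\cdot(y'\cdot y)']'$, and then eliminates the inner terms via $y'\cdot y=0$, (NG3) and (NG0). You instead recognize $((x\cdot y)'\cdot y)'\cdot y$ as the meet $(x\cdot y)\wedge y$ in the induced order of $\mathbb E(\mathbf R)$ and conclude from $x\cdot y\le y$ that this meet equals $x\cdot y$. Your finish is shorter and more conceptual, but it leans on the order-theoretic content of Theorem~\ref{th2} --- namely that the term $(a'\cdot b)'\cdot b$ really computes the infimum, including the ``greatest lower bound'' half, which is precisely where Theorem~\ref{th2} invokes (NG7). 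So (NG7) has not been avoided, only pushed into a cited lemma; the paper's version keeps the dependence explicit and term-level. Both proofs are sound.
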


\begin{proof}
Let $\mathbb E(\mathbf R)=(R;\oplus,',0,1)$, 
$\mathbb R(\mathbb E(\mathbf R))=(R;\cdot_1,',0,1)$,  and $a,b\in R$. From 
(NG7),  (NG4),  (NG3) and (NG0) we have 
\smallskip

$\begin{array}{r c l}
a\cdot_1 b&=&((a'\wedge b)\oplus b')'=(a'\wedge b)'\cdot b\\
&=&[(a\cdot b'')'\cdot b'']'\cdot b\\
&=&%
[((a\cdot b)\cdot (b'\cdot b)')'\cdot (b'\cdot b)']'\\
&=&%
[((a\cdot b)\cdot 0')'\cdot 0']'=a\cdot b.
\end{array}$

\end{proof}

Now we can characterize lattice orthoalgebras by means of effect groupoids 
as follows:

\begin{Theorem}
A lattice effect algebra $\mathbf E=(E,\oplus,',0,1)$ is a lattice orthoalgebra if and only if its corresponding effect groupoid $\mathbf R=(E,\cdot,',0,1)$ is  idempotent, i.e., 
\ it satisfies the identity $x\cdot x= x$.
\end{Theorem}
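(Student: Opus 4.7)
The plan is to reduce the theorem to the standard fact that, among lattice effect algebras, the lattice orthoalgebras are exactly those in which $x\wedge x'=0$ holds for every $x$. Starting from the formula $x\cdot x=((x'\wedge x)\oplus x')'$ supplied by Theorem~\ref{mainleatoneg}, I would show that idempotence of $\cdot$ is equivalent to this pointwise orthogonality condition, and then that the orthogonality condition is equivalent to axiom (E5).

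For the first equivalence, applying the involution turns $x\cdot x=x$ into $(x'\wedge x)\oplus x'=x'$. Since $0\oplus x'=x'$ holds by (F3) and $\oplus$ is cancellative (a standard consequence of (E1)--(E3) via uniqueness of the element $c$ with $a\oplus c=b$, equivalently via the existence of $\ominus$ noted in (F1)), this forces $x'\wedge x=0$. Conversely, if $x\wedge x'=0$ for every $x$, then $x\cdot x=(0\oplus x')'=x''=x$ directly from (F3) and (F4).

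For the second equivalence, first suppose $\mathbf E$ is a lattice orthoalgebra and set $a:=x\wedge x'$. From $a\le x$ we get $x'\le a'$, hence $a\le x'\le a'$, so by (F2) the sum $a\oplus a$ exists; (E5) then forces $a=0$, so $x\wedge x'=0$. Conversely, if $x\wedge x'=0$ for all $x$ and $x\oplus x$ exists, then (F2) gives $x\le x'$, whence $x=x\wedge x'=0$, which is exactly (E5).

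The argument has no serious obstacle; the only substantive ingredient beyond the defining identity and (F1)--(F4) is the cancellativity of $\oplus$, which I would either cite from \cite{dvurec} or record as a one-line remark. Everything else is direct manipulation using the antitone involution and Theorem~\ref{mainleatoneg}.
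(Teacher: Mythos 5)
Your proof is correct and follows essentially the same route as the paper: both directions hinge on the identity $x\cdot x=((x'\wedge x)\oplus x')'$ together with cancellativity of $\oplus$, with the forward direction reducing to $x\wedge x'=0$ and the converse recovering (E5). The only difference is cosmetic: the paper cites the fact that lattice orthoalgebras satisfy $x\wedge x'=0$ from Example~4.3 of \cite{CHK}, whereas you prove it directly via $x\wedge x'\le x'\le(x\wedge x')'$, which makes your version slightly more self-contained.
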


\begin{proof}
If $\mathbf E$ is a lattice orthoalgebra then $a\wedge a'=0$ for 
all $a\in E$ (cf.\ Example~4.3 in \cite{CHK}). Hence
\[
a\cdot a=((a'\wedge a)\oplus a')'=(0\oplus a')'=(a')'=a
\]
according to (F3) and (F4). Conversely, assume that $\mathbf R$ satisfies 
$x\cdot x=x$. Let $a\in E$ such that $a\oplus a$ exists. 
Then $a\leq a'$ and hence $a=a\cdot a=((a'\wedge a)\oplus a')'$ according 
to Theorem \ref{th2}. It follows that  $0\oplus a'=(a'\wedge a)\oplus a'$, 
i.e., using (E3) we obtain $0=a'\wedge a=a$. 
\end{proof}

Similarly, we can characterize {\rm MV}-effect algebras by 
means of effect groupoids as follows:

\begin{Corollary}\label{cor1}
A lattice effect algebra $\mathbf E=(E,\oplus,',0,1)$ is an {\rm MV}-effect algebra if and only if its corresponding effect groupoid $\mathbf R=(E,\cdot,',0,1)$ is commutative.
\end{Corollary}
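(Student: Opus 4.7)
The plan is to unwind the two conditions directly using the definition of $\cdot$ given in Theorem~\ref{mainleatoneg} and to observe that they become the same identity after applying the involution.

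First I would note that by definition
\[
x\cdot y=((x'\wedge y)\oplus y')'\quad\text{and}\quad y\cdot x=((y'\wedge x)\oplus x')',
\]
so that $x\cdot y=y\cdot x$ for all $x,y\in E$ is equivalent, applying ${}'$ to both sides and using (F4), to the identity
\begin{equation}\label{eq:commid}
(x'\wedge y)\oplus y'=(y'\wedge x)\oplus x'\qquad(x,y\in E).
\end{equation}

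Next I would compare \eqref{eq:commid} with the defining MV-condition $(x\wedge y')\oplus y=(y\wedge x')\oplus x$. Substituting $x\mapsto x'$ and $y\mapsto y'$ in the MV-condition, and using $(x')'=x$, $(y')'=y$ from (F4), yields exactly \eqref{eq:commid}; conversely, the same substitution applied to \eqref{eq:commid} returns the MV-condition. Since ${}'$ is a bijection of $E$ onto itself (again by (F4)), the universally quantified statements \eqref{eq:commid} and the MV-condition are equivalent.

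Combining the two equivalences gives the claim. There is no real obstacle here: the argument is a pure bookkeeping of primes, and the only substantive input is the definition of $\cdot$ from Theorem~\ref{mainleatoneg} together with the involution law (F4). (Alternatively, one could invoke Theorem~\ref{th1} to pass freely between $\mathbf E$ and $\mathbf R$, but this is unnecessary for such a direct calculation.)
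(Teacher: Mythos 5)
Your proposal is correct and follows essentially the same route as the paper: both directions come down to the observation that the defining MV-identity $(x\wedge y')\oplus y=(y\wedge x')\oplus x$ turns into the commutativity identity for $\cdot$ under the substitution $x\mapsto x'$, $y\mapsto y'$ together with the involution law (F4). The paper simply writes out the two substitutions as two separate one-line computations rather than packaging them as a single bijection argument.
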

\begin{proof}
If $\mathbf E$ is an {\rm MV}-effect algebra then
\[
x\cdot y=((x'\wedge y)\oplus y')'=((y'\wedge x)\oplus x')'= y\cdot x
\]
and if, conversely, $\cdot $ is commutative then
\[
(x\wedge y')\oplus y=(x'\cdot y')'=(y'\cdot x')'=(y\wedge x')\oplus x
\]
and hence $\mathbf E$ is an {\rm MV}-effect algebra.
\end{proof}

It is an easy observation that a commutative effect groupoid is 
associative  (cf.\ \cite[Theorem 2]{Ko}). Due to 
Rie\v canov\'a's theorem (cf.\ \cite[Theorem 3.2]{Riecanova}) we conclude 

\begin{Corollary}\label{cor1x}
Every effect groupoid is a set-theoretic union 
of associative and commutative sub-effect groupoids.
\end{Corollary}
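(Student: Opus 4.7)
The plan is to transfer Rie\v canov\'a's block decomposition theorem for lattice effect algebras across the bijective correspondence $\mathbf E \leftrightarrow \mathbb R(\mathbf E)$ established in Theorems \ref{mainleatoneg}--\ref{th1}. Starting with an effect groupoid $\mathbf R = (R;\cdot,',0,1)$, I would first form the associated lattice effect algebra $\mathbb E(\mathbf R) = (R;\oplus,',0,1)$ via Theorem~\ref{th2}, so that the underlying set is literally the same.

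Next, I would invoke Rie\v canov\'a's theorem, as quoted in the remark after Definition~\ref{mvef}, to write $\mathbb E(\mathbf R) = \bigcup_{i\in I} M_i$, where each block $M_i$ is a maximal sub-effect algebra that is itself an \textrm{MV}-effect algebra and, crucially, a sublattice of $(R,\le)$ (so $M_i$ is closed under $\wedge$, $\vee$, $'$, $0$, $1$, and under $\oplus$ wherever $\oplus$ is defined on $M_i$).

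The key verification is that each $M_i$ is a sub-effect groupoid of $\mathbf R$ in the sense of Definition~\ref{def1}. For $a,b\in M_i$, the defining formula
\[
a\cdot b = ((a'\wedge b)\oplus b')'
\]
from Theorem~\ref{mainleatoneg} only uses $'$, $\wedge$, and the (always defined, by (F2) applied to $a'\wedge b\le b$) sum $(a'\wedge b)\oplus b'$. All three ingredients keep us inside $M_i$, so $a\cdot b\in M_i$. Together with $0,1\in M_i$ and closure under $'$, this gives $M_i$ as a sub-effect groupoid.

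Finally, since $M_i$ is an \textrm{MV}-effect algebra, Corollary~\ref{cor1} shows the induced effect groupoid on $M_i$ is commutative, and the cited result \cite[Theorem~2]{Ko} that a commutative effect groupoid is automatically associative finishes the proof. I do not anticipate a genuine obstacle here; the only point requiring care is confirming that blocks are indeed closed under the derived operation $\cdot$, which reduces to the fact that blocks are sublattices closed under $'$ and under the relevant partial sums.
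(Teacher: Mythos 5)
Your proposal is correct and follows essentially the same route as the paper, which derives the corollary directly from Rie\v canov\'a's block decomposition together with Corollary~\ref{cor1} and the observation that commutative effect groupoids are associative. The only addition is your explicit check that blocks are closed under the derived operation $\cdot$, which the paper leaves implicit but which is indeed the point requiring care.
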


\section{The groupoid-based logic for lattice effect algebras}\label{LDM}

We know that the logic associated to MV-algebras is already 
desribed as many-valued Lukasiewicz logic and its axioms and 
reference rules are well-known, the same can be said on the logic 
induced by orthomodular lattices (see e.g. \cite{Abbott2}). The 
previous Corollaries \ref{cor1} and \ref{cor1x} motivate us to set up an appropriate 
logic also for lattice effect algebras. Of course, we will formulate the 
axioms and rules in the language of effect groupoids as derived in the previous part.

In what follows, similarly as in \cite{suzuki}, 
we denote propositional variables by  $p; q; r; \dots$, the logical binary connective by $\cdot$, 
the logical unary connective negation by $\neg$, and two logical constants $\bot$ and $\top$ where $\bot$  
stands for the contradiction and $\top$ stands for the tautology. 
So formulae are inductively defined by the following BNF:
$$\phi::=p\mid \phi\cdot \phi\mid \neg \phi \mid \bot \mid \top.$$
We denote formulae by $\phi, \psi, \chi, \mu, \dots$ and 
let $\Phi$ and $\Lambda$ be the set of all propositional variables 
and the set of all formulae. Let 
 $\Gamma, \Delta, \Sigma, \Pi$ be arbitrary (possibly empty) 
 finite lists of formulae, $\varphi, \nu$ a list of at
most one formula. 
A logical consequence relation $\lcr$ is a binary relation on $\Lambda$. 
We may  interpret  $\phi\lcr \psi$  as \uv{if $\phi$ then $\psi$.}
 So we call the left-hand formulae premises and the right-hand formulae conclusions. We may
sometimes call logical consequences sequents. In the following 
\begin{tabular}{r l}
\begin{tabular}{c}
{$\Gamma\lcr \varphi$}\quad {$\Delta\lcr \nu$}\\
\hline 
 \mbox{ \rule{0cm}{0.41cm}$\phi\eqcr \psi$}
\end{tabular}&
\end{tabular}
will be short for the  two rules: 
\begin{tabular}{r l}
\begin{tabular}{c}
{$\Gamma\lcr \varphi$}\quad {$\Delta\lcr \nu$}\\
\hline 
 \mbox{ \rule{0cm}{0.41cm}$\phi\lcr \psi$}
\end{tabular}&
\end{tabular} and 
\begin{tabular}{r@{.} l}
\begin{tabular}{c}
{$\Gamma\lcr \varphi$}\quad {$\Delta\lcr \nu$}\\
\hline 
 \mbox{ \rule{0cm}{0.41cm}$\psi\lcr \phi$}
\end{tabular}&
\end{tabular}
We now  introduce a sequent calculus 
($L_{LEA}$) given as follows.\\[0.2cm]
{\parindent0pt
\begin{tabular}{r@{\ }l}
\begin{tabular}{c}
{$\phi\lcr \neg\psi$}\\
\hline 
$\psi\lcr \neg\phi$
\end{tabular}&($\neg$-r)
\end{tabular}\phantom{x}
\begin{tabular}{r@{\ }l}
\begin{tabular}{c}
{$\phi\lcr \psi \quad \psi\lcr \phi $}\\
\hline 
\mbox{ \rule{0cm}{0.41cm}$\phi\cdot \mu\lcr \psi\cdot \mu$}
\end{tabular}&(itm1)
\end{tabular}
\\[0.1cm]
\begin{tabular}{r@{\ }l}
\begin{tabular}{c}
\phantom{$\phi\lcr \psi$}\\
\hline 
\mbox{\rule{0cm}{0.51cm}$\neg\neg\phi\eqcr \phi$}
\end{tabular}&(DN)
\end{tabular}\phantom{}\hskip-0.56543251em
\begin{tabular}{r@{\ }l}
\begin{tabular}{c}
{$\phi\lcr \psi \quad \psi\lcr \phi $}\\
\hline 
\mbox{ \rule{0cm}{0.41cm}$\mu\cdot \phi\lcr \mu\cdot \psi$}
\end{tabular}&(itm2)
\end{tabular}
\\[0.2cm]
\begin{tabular}{r@{\ }l}
\begin{tabular}{c}
{$\phi\cdot \neg\psi\lcr \bot$}\\
\hline 
\mbox{ \rule{0cm}{0.41cm}$\phi\lcr \psi$}
\end{tabular}&(m-$\bot$)
\end{tabular}
\begin{tabular}{r@{\ }l}
\begin{tabular}{c}
{$\phi\lcr \psi$}\\
\hline 
\mbox{ \rule{0cm}{0.41cm}$\phi\cdot \neg\psi\lcr \bot$}
\end{tabular}&($\bot$-m)
\end{tabular}
\\[0.2cm]
\begin{tabular}{r@{\ }l}
\begin{tabular}{c}
\phantom{$\phi\lcr \psi$}\\
\hline 
 \mbox{\rule{0cm}{0.51cm}$\top\cdot\phi\eqcr \phi$}
\end{tabular}&(1-l)
\end{tabular}
\hskip-0.6543251em
\phantom{\,}
\begin{tabular}{r@{\ }l}
\begin{tabular}{c}
\phantom{$\phi\lcr \psi$}\\
\hline 
 \mbox{\rule{0cm}{0.51cm}$\phi\cdot\top\eqcr \phi$}
\end{tabular}&(1-r)
\end{tabular}\\[0.2cm]
\begin{tabular}{r@{\ }l}
\begin{tabular}{c}
\phantom{$\phi\lcr \psi$}\\
\hline 
 \mbox{\rule{0cm}{0.51cm}$\bot\cdot\phi\eqcr \bot$}
\end{tabular}&(0-l)
\end{tabular}
\hskip-1.234587643251em\phantom{\,}
\begin{tabular}{r@{\ }l}
\begin{tabular}{c}
\phantom{$\phi\lcr \psi$}\\
\hline 
 \mbox{\rule{0cm}{0.51cm}$\phi\cdot\bot\eqcr \bot$}
\end{tabular}&(0-r)
\end{tabular}\\[0.2cm]
\begin{tabular}{r@{\ }l}
\begin{tabular}{c}
\phantom{$\phi\lcr \psi$}\\
\hline \mbox{ \rule{0cm}{0.51cm}$\bot\eqcr \phi \cdot (\psi \cdot \neg \phi)$}
\end{tabular}&(ol)
\end{tabular}\\[0.2cm]
\begin{tabular}{r@{\ }l}
\begin{tabular}{c}
\phantom{$\phi\lcr \psi$}\\
\hline \mbox{ \rule{0cm}{0.51cm}$\bot\eqcr (\psi \cdot \neg \phi) \cdot \phi $}
\end{tabular}&(or)
\end{tabular}\\[0.2cm]
\begin{tabular}{r@{\ }l}
\begin{tabular}{c}
\phantom{$\phi\lcr \psi$}\\
\hline \mbox{ \rule{0cm}{0.51cm}$\phi\cdot \neg(\neg \psi\cdot \phi)\eqcr %
\neg(\neg \psi\cdot \phi)\cdot \phi$}
\end{tabular}&(cm)
\end{tabular}\\[0.2cm]
\begin{tabular}{r@{\ }l}
\begin{tabular}{c}
\phantom{$\phi\lcr \psi$}\\
\hline \mbox{ \rule{0cm}{0.51cm}$\neg(\neg \psi\cdot \phi)\cdot \phi\eqcr %
\neg(\neg \phi\cdot \psi)\cdot \psi$}
\end{tabular}&(mc)
\end{tabular}\\[0.2cm]
\begin{tabular}{r@{\ }l}
\begin{tabular}{c}
\phantom{$\phi\lcr \psi$}\\
\hline \mbox{ \rule{0cm}{0.51cm}$\phi\cdot \psi\eqcr \psi \cdot \neg(\neg(\neg\psi\cdot \neg\phi) \cdot \neg\phi) $}
\end{tabular}&(ocm)
\end{tabular}\\[0.13cm]

Denote $\chi(\phi,\psi):=\neg(\neg(\phi\cdot \neg\phi)\cdot \neg\phi)$. \\[0.13cm]
\begin{tabular}{r@{\ }l}
\begin{tabular}{c}
\phantom{$\phi\lcr \psi$}\\
\hline \mbox{ \rule{0cm}{0.51cm}$\chi(\phi,\psi)\cdot \mu\eqcr %
\chi(\phi\cdot \mu,\psi\cdot \mu)$}
\end{tabular}&(mds)
\end{tabular}\\[0.2cm]

\begin{tabular}{r@{\ }l}
\begin{tabular}{c}
{$\phi\lcr \psi$}\phantom{x}{$\psi\lcr \chi$}\\
\hline 
\mbox{\rule{0cm}{0.51cm}$\phi\lcr \chi$}
\end{tabular}&(cut)
\end{tabular}\\[0.2cm]

\begin{tabular}{r@{\ }l}
\begin{tabular}{c}
{$\neg\phi\lcr \psi$}\phantom{xx}{$\neg(\phi\cdot \psi)\lcr \mu$}\\
\hline 
\mbox{ \rule{0cm}{0.51cm}$\neg\psi\lcr \mu$}
\end{tabular}&(ass1)
\end{tabular}
\\[0.5cm]
\begin{tabular}{r@{\ }l}
\begin{tabular}{c}
{$\neg\phi\lcr \psi$}\phantom{xx}{$\neg(\phi\cdot \psi)\lcr \mu$}\\
\hline 
\mbox{ \rule{0cm}{0.51cm}$\neg\phi\lcr \psi\cdot \mu$}
\end{tabular}&(ass2)
\end{tabular}
\\[0.2cm]
\begin{tabular}{r@{\ }l}
\begin{tabular}{c}
{$\neg\phi\lcr \psi$}\phantom{xx}{$\neg(\phi\cdot \psi)\lcr \mu$}\\
\hline 
\mbox{ \rule{0cm}{0.51cm}$(\phi\cdot \psi)\cdot \mu \eqcr \phi\cdot (\psi\cdot \mu)$}
\end{tabular}&(ass3)
\end{tabular}
\\[0.2cm]

}
 The groupoid-based logic for lattice effect algebras 
 is the collection of all sequents derivable in $L_{LEA}$, 
denoted by ${\mathbf L}_{LEA}$.

Using previous axioms and inductive steps, 
we can derive several useful rules as follows.

\begin{proposition}\label{seqcDMP} 
In the sequent calculus $L_{LEA}$, we can derive the following theorems and inference rules.
\end{proposition}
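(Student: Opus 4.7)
The plan is to derive each claimed rule by exhibiting a finite proof tree in the sequent calculus $L_{LEA}$, using the listed axioms together with (cut) as the principal chaining device. Since $L_{LEA}$ was designed to internalise the effect-groupoid axioms of Definition~\ref{def1}, and since Theorems~\ref{mainleatoneg} through \ref{th1} establish a tight correspondence between effect groupoids and lattice effect algebras, every claimed rule should admit a syntactic derivation paralleling an identity valid in every effect groupoid; the work is to make these parallels explicit.

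First I would lay down a short stock of reusable derivations: reflexivity $\phi \lcr \phi$, obtained by applying (cut) to the two halves of $\neg\neg\phi \eqcr \phi$ given by (DN); monotonicity of $\cdot$ in each slot, read off from (itm1) and (itm2) once reflexivity is available; and substitutivity of provably equivalent formulae inside negation, via ($\neg$-r) together with (DN). These form the rewriting engine used throughout the rest of the proof.

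Next I would treat the rules that correspond to the identities (NG2)--(NG6). These are essentially direct rewrites: one expands the target sequent using the specific axioms (0-l), (0-r), (1-l), (1-r), (cm), (mc), (ocm), and then chains the resulting equivalences with (cut). For rules that reflect the conditional identity (NG8), the premises $\neg\phi \lcr \psi$ and $\neg(\phi\cdot \psi) \lcr \mu$ — which via (m-$\bot$) and ($\bot$-m) correspond syntactically to the orthogonality conditions $x\cdot y = 0$ and $(x\cdot y)\cdot z = 0$ — are fed into (ass1), (ass2), (ass3) to yield the desired associativity rewrites and order conclusions. The rules involving the meet-like auxiliary $\chi(\phi,\psi)$ are handled by (mds) combined with the substitutivity stock built up earlier.

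The main obstacle is combinatorial bookkeeping rather than any genuine conceptual difficulty: each derivation must be carefully assembled, and the proof trees for rules involving (ocm) or nested occurrences of $\chi(\phi,\psi)$ grow quickly, so intermediate lemmas are worth factoring out to keep the trees manageable. A secondary point is to fix in advance a strict order on the rules to be established, so that every derivation appeals only to the original axioms or to rules proved strictly earlier, avoiding any hidden circularity; this is particularly delicate for the symmetric variants (such as left/right-commutativity of $\cdot$ with various unary combinations), which can easily be conflated with the rule being proved.
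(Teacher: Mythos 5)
Your derivation of reflexivity is fine and in fact cleaner than the paper's: cutting the two halves of (DN), $\phi\lcr\neg\neg\phi$ and $\neg\neg\phi\lcr\phi$, immediately gives $\phi\lcr\phi$, whereas the paper routes (Ax) through (ol), (1-l) and (m-$\bot$). The ($\neg$-$\neg$) rule is also within reach of your ``substitutivity inside negation'' remark (from $\phi\lcr\psi$ and (DN) get $\phi\lcr\neg\neg\psi$, then apply ($\neg$-r)). But beyond that the proposal misses the target of the Proposition. The rules it actually asks you to derive are (Ax), (tnb) $\top\eqcr\neg\bot$, (ntb) $\neg\top\eqcr\bot$, (bot) $\bot\lcr\phi$, (top) $\phi\lcr\top$, ($\neg$-$\neg$) and ($\neg$-l). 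Almost all of your effort instead goes into ``deriving'' the rules corresponding to (NG2)--(NG8) --- that is, (0-l), (0-r), (1-l), (1-r), (ol), (or), (cm), (mc), (ocm), (mds), (ass1)--(ass3) --- which are \emph{primitive axioms} of $L_{LEA}$ and need no derivation at all; their relation to the effect-groupoid identities is the business of the soundness theorem, not of this Proposition. Meanwhile (tnb), (ntb), (bot) and (top) are never produced. Each needs a small concrete chain, e.g.\ for (top): from (ntb) and (itm2) get $\phi\cdot\neg\top\lcr\phi\cdot\bot$, cut with (0-r) to get $\phi\cdot\neg\top\lcr\bot$, then (m-$\bot$) yields $\phi\lcr\top$; and (bot) comes from (0-l) via (m-$\bot$). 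These are short but they are the entire content of the statement.

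A secondary flaw is the claimed ``monotonicity of $\cdot$ in each slot, read off from (itm1) and (itm2) once reflexivity is available.'' Both of those rules take the \emph{two} premises $\phi\lcr\psi$ and $\psi\lcr\phi$; they only license replacement of provably interderivable formulae under the product, not one-sided monotonicity. No one-sided monotonicity rule is primitive in $L_{LEA}$, so the ``rewriting engine'' you build is strictly weaker than advertised, and any later step that silently uses one-directional monotonicity of $\cdot$ would be unjustified. To repair the proof, drop the (NG2)--(NG8) material, keep your (DN)+(cut) derivation of (Ax) and the ($\neg$-r)+(DN) derivation of ($\neg$-$\neg$) and ($\neg$-l), and supply the explicit four short derivations of (tnb), (ntb), (bot), (top) from (0-l), (0-r), (1-l), (itm2), (m-$\bot$) and (cut).
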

\noindent%
\begin{tabular}{r@{\ }l}
\begin{tabular}{c}
\phantom{$\phi\lcr \psi$}\\
\hline 
\mbox{\rule{0cm}{0.51cm}$\phi\lcr \phi$}
\end{tabular}&(Ax)
\end{tabular}
\phantom{xxxxx}\\[0.25cm]
\begin{tabular}{r@{\ }l}
\begin{tabular}{c}
\phantom{$\phi\lcr \psi$}\\
\hline 
\mbox{\rule{0cm}{0.51cm}$\top\eqcr \neg\bot$}
\end{tabular}&(tnb)
\end{tabular}
\phantom{x}
\begin{tabular}{r@{\ }l}
\begin{tabular}{c}
\phantom{$\phi\lcr \psi$}\\
\hline 
\mbox{\rule{0cm}{0.51cm}$\neg\top\eqcr \bot$}
\end{tabular}&(ntb)
\end{tabular}\\[0.25cm]
\begin{tabular}{r@{\ }l}
\begin{tabular}{c}
\phantom{$\phi\lcr \psi$}\\
\hline 
\mbox{\rule{0cm}{0.51cm}$\bot\lcr \phi$}
\end{tabular}&(bot)
\end{tabular}
\phantom{xxxxsx}\begin{tabular}{r@{\ }l}
\begin{tabular}{c}
\phantom{$\phi\lcr \psi$}\\
\hline 
\mbox{\rule{0cm}{0.51cm}$\phi\lcr \top$}
\end{tabular}&(top)
\end{tabular}\\[0.25cm]
\begin{tabular}{r@{\ }l}
\begin{tabular}{c}
{$\phi\lcr \psi$}\\
\hline 
\mbox{\rule{0cm}{0.51cm}$\neg\psi\lcr \neg\phi$}
\end{tabular}&($\neg$-$\neg$)
\end{tabular}
\phantom{xx}
\begin{tabular}{r@{\ }l}
\begin{tabular}{c}
{$\neg\phi\lcr \psi$}\\
\hline 
\mbox{\rule{0cm}{0.51cm}$\neg\psi\lcr \phi$}
\end{tabular}&($\neg$-l)
\end{tabular}
\begin{proof} (Ax) follows from (ol) when $\psi=\top$ and then we apply (m-$\bot$) when $\psi=\phi$. 
First half of (tnb) follows from (0-r) and then we apply  (m-$\bot$). Second half of (tnb) follows by 
applying first (1-l) when $\phi=\neg\bot\cdot\neg \top$ and then applying (ol) when $\phi=\top$ and $\psi=\neg\bot$ and using (cut) 
and (m-$\bot$). 
By the same considerations we can get (ntb). 

To show (top) we first apply (itm2) using (ntb) and then (0-r). This yields that $\phi\cdot \neg \top\lcr \bot$ which in turn 
from (m-$\bot$) and (cut) gives (top). (bot) immediately follows from (0-l) and then we again apply  (m-$\bot$).

($\neg$-$\neg$)  and ($\neg$-l) follow by the same 
considerations as in \cite[Proposition 2.1]{suzuki}.
\end{proof}

The key question is the soundness of the given logic, i.e. its correspondence to the given algebraic structure. 
Fortunately, we are able to prove the following theorem.

\begin{theorem} \label{soundDMP} (Soundness). The groupoid-based 
logic for lattice effect algebras ${\mathbf L}_{LEA}$ 
is sound for the class of  effect groupoids. That is, for every sequent 
$\phi\lcr \psi$ in ${\mathbf L}_{LEA}$, $s_{\phi}\leq t_{\psi}$ is valid 
on all  effect groupoids ${\mathbf R}$, where $s_{\phi}$ and
$t_{\psi}$ are the corresponding term functions for $\phi$ and  $\psi$ and $\leq$ is 
the order in the corresponding lattice effect algebra $\mathbb E(\mathbf R)$.
\end{theorem}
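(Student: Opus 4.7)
The plan is to argue by induction on the length of a derivation of $\phi\lcr\psi$ in $L_{LEA}$. First, I would formally define, for any assignment of the propositional variables to elements of an effect groupoid $\mathbf R=(R;\cdot,',0,1)$, the term-function interpretation $s_\phi\in R$ in the obvious recursive manner: variables go to their assigned values, $\phi\cdot\psi$ to $s_\phi\cdot s_\psi$, $\neg\phi$ to $s_\phi'$, and $\bot,\top$ to $0,1$. By Theorem~\ref{th2} and the chain of equivalences labelled $(\dagger)$ in the proof of Theorem~\ref{mainleatoneg}, the inequality $s_\phi\le t_\psi$ in the induced lattice effect algebra $\mathbb E(\mathbf R)$ is equivalent to the identity $s_\phi\cdot t_\psi'=0$ in $\mathbf R$. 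So the soundness task reduces to verifying, for each axiom scheme and each inference rule of $L_{LEA}$, that it preserves this translated inequality.

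The bulk of the work is a case analysis matching the rules of $L_{LEA}$ against the axioms (NG0)--(NG8) of Definition~\ref{def1}. The constant- and negation-related rules (DN), (tnb), (ntb), (bot), (top), (1-l), (1-r), (0-l), (0-r) follow from (NG0)--(NG3) together with the unit and annihilator laws already derived in the proof of Theorem~\ref{mainleatoneg}. The rules (ol) and (or) are exactly the two halves of (NG4); (ocm) translates to (NG5); (cm) and (mc) to the two equalities of (NG6); (mds) to (NG7); and (ass1), (ass2), (ass3) jointly encode the three conclusions of the conditional axiom (NG8), under the translation $\neg\phi\lcr\psi\ \leftrightarrow\ s_\phi'\cdot t_\psi'=0$ (again via $(\dagger)$) and $\neg(\phi\cdot\psi)\lcr\mu\ \leftrightarrow\ (s_\phi\cdot t_\psi)'\cdot r_\mu'=0$. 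The rules ($\neg$-r), ($\neg$-$\neg$), ($\neg$-l) are immediate from the antitone involution property $a\le b\Leftrightarrow b'\le a'$ of $\mathbb E(\mathbf R)$.

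Two classes of rules require a brief independent argument rather than a direct pointer to an (NG)-axiom. The cut rule reduces to transitivity of $\le$ on $\mathbb E(\mathbf R)$. The monotonicity rules (itm1) and (itm2) have premises $\phi\lcr\psi$ and $\psi\lcr\phi$, which by the inductive hypothesis yield $s_\phi\le t_\psi$ and $t_\psi\le s_\phi$; antisymmetry of $\le$ forces $s_\phi=t_\psi$, whence both $s_\phi\cdot r_\mu=t_\psi\cdot r_\mu$ and $r_\mu\cdot s_\phi=r_\mu\cdot t_\psi$, so the conclusions hold trivially. Similarly (m-$\bot$) and ($\bot$-m) are just the equivalence $a\le b\Leftrightarrow a\cdot b'=0$ restated, so they transfer along the induction without incident.

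I expect the main obstacle to be the careful bookkeeping for the three associativity rules (ass1)--(ass3): the hypotheses $\neg\phi\lcr\psi$ and $\neg(\phi\cdot\psi)\lcr\mu$ must be converted into the precise zero-product premises of (NG8), and only then does (NG8) deliver the required conclusions. Once that translation is set up cleanly (via $(\dagger)$), each of the three conclusions matches one of the three outputs of (NG8) exactly, so the verification closes. The induction step is then complete across every rule of $L_{LEA}$, establishing that every derivable sequent $\phi\lcr\psi$ satisfies $s_\phi\le t_\psi$ on every effect groupoid, which is the soundness claim.
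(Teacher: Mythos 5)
Your proposal follows essentially the same route as the paper's proof: an induction over derivations, translating $s_\phi\le t_\psi$ into $s_\phi\cdot t_\psi'=0$ via the order of $\mathbb E(\mathbf R)$, and then matching each axiom and rule of $L_{LEA}$ against the corresponding groupoid axiom — (ol)/(or) to (NG4), (ocm) to (NG5), (cm)/(mc) to (NG6), (mds) to (NG7), (ass1)--(ass3) to (NG8), with (itm1)/(itm2), (cut) and (m-$\bot$)/($\bot$-m) handled by antisymmetry, transitivity and the order--zero-product equivalence respectively. This is exactly the paper's argument, just spelled out with slightly more bookkeeping detail.
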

\begin{proof} Let  ${\mathbf R}=(R;\cdot,',0,1)$ be an effect groupoid. 
The axiom (DN-l) follows from the fact that $a=a''$. For the inductive steps,
($\neg$-r) follows from the fact that $a\leq b \Rightarrow b'\leq a'$\ 
and $a=a''$, (itm1) and (itm2) follow from the 
antisymmetry of $\leq$, and (cut) follows from the transitivity of $\leq$. 
Since  $0$ is the bottom element   of $\mathbb E(\mathbf R)$  we have as in the proof 
of Theorem \ref{th2} that $a\leq b$ if and only if $a\cdot b'\leq 0$. This 
yields that both  (m-$\bot$) and ($\bot$-m) are valid. Since 
$1\cdot a\leq a$ and $a\leq 1\cdot a$, and  
$a\cdot 1\leq a$ and $a\leq a\cdot 1$ we get (1-l) and (1-r). Similarly, 
since $0\cdot a\leq 0$ and $0\leq 0\cdot a$, and  
$a\cdot 0\leq 0$ and $0\leq a\cdot 0$ we get (0-l) and (0-r). 
The axioms (ol) and (or) follow immediately from (NG4). Further, 
the axioms (cm) and (mc) are valid by (NG6) and the axiom (ocm) 
is valid by (NG5). The axiom (mds) follows from (NG7) and the axioms 
(ass1), (ass2) and (ass3) follow from (NG8). 

\end{proof} 


The second important property of a given logic is its completeness. 
Similarly as for the classical logic, we prove the following assertion by using of the corresponding Lindenbaum-Tarski algebra.

\begin{theorem} \label{compDMP} 
(Completeness). The groupoid-based logic for lattice effect algebras 
${\mathbf L}_{LEA}$ 
is complete with respect to the class of effect groupoids.
\end{theorem}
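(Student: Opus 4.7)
The plan is to construct the Lindenbaum--Tarski algebra of $\mathbf{L}_{LEA}$ and show that it is itself an effect groupoid; then, as in all proofs of this kind, the completeness statement reduces to evaluating a valid sequent in this canonical model. Concretely, on the set $\Lambda$ of formulae I would define $\phi \sim \psi$ iff both $\phi \lcr \psi$ and $\psi \lcr \phi$ are derivable in $L_{LEA}$. By (Ax) together with (cut) from Proposition~\ref{seqcDMP}, the relation $\sim$ is an equivalence, and the rules (itm1), (itm2) and ($\neg$-$\neg$) show that $\sim$ is a congruence with respect to the syntactic operations $\cdot$ and $\neg$. I would then define $\mathbf{R}_{LT} := (\Lambda/{\sim};\,\cdot,\,\neg,\,[\bot],\,[\top])$, where the operations on equivalence classes are induced from the syntactic ones.

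The core step is to verify that $\mathbf{R}_{LT}$ satisfies axioms (NG0)--(NG8) of Definition~\ref{def1}. Here I would simply read off each axiom from the calculus: (NG0), that $[\top]$ is a two-sided unit, comes from (1-l) and (1-r); (NG1) from (DN); (NG2) from (0-l), (0-r); (NG3) from (tnb); (NG4) from the axioms (ol) and (or); (NG5) from (ocm); (NG6) from (cm) and (mc); (NG7) from (mds); and (NG8) from the pair (ass1)--(ass3), which encodes exactly the conditional associativity and its hypotheses. For each axiom I would use the double-line convention $\eqcr$ to obtain both directions $\phi \lcr \psi$ and $\psi \lcr \phi$, so that the relevant classes coincide. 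Thus $\mathbf{R}_{LT}$ is an effect groupoid and, by Theorem~\ref{th2}, gives rise to a lattice effect algebra $\mathbb{E}(\mathbf{R}_{LT})$ whose induced order $\leq$ satisfies $[\phi] \leq [\psi]$ iff $\phi\cdot \neg \psi \lcr \bot$ iff $\phi \lcr \psi$ (by ($\bot$-m) and (m-$\bot$)).

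With $\mathbf{R}_{LT}$ in hand, completeness follows by the usual argument. Suppose a sequent $\phi \lcr \psi$ is such that $s_\phi \leq t_\psi$ holds in every effect groupoid. Interpret the propositional variables of $\phi,\psi$ in $\mathbf{R}_{LT}$ by the canonical assignment $p \mapsto [p]$. Since the term functions respect the syntactic operations, the interpretations of $\phi$ and $\psi$ are precisely $[\phi]$ and $[\psi]$. Applying the hypothesis to $\mathbf{R}_{LT}$ yields $[\phi] \leq [\psi]$ in $\mathbb{E}(\mathbf{R}_{LT})$, which by the characterisation above is equivalent to $\phi \lcr \psi$ being derivable in $L_{LEA}$. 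This is the desired conclusion.

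The principal obstacle is the verification of (NG7) and (NG8) in $\mathbf{R}_{LT}$. The axiom (mds) was designed precisely to yield (NG7) after recognising $\chi(\phi,\psi)$ as the syntactic counterpart of $(\phi\cdot \psi')'\cdot \psi'$-style expressions, but checking that both directions are available requires a short derivation using (DN) and ($\neg$-r). For (NG8) the difficulty is that its hypotheses ``$x'\cdot y' = 0$ and $(x\cdot y)'\cdot z' = 0$'' correspond to the premises $\neg\phi \lcr \psi$ and $\neg(\phi\cdot \psi) \lcr \mu$ of (ass1)--(ass3), so the verification is not a pure equational statement but a conditional one; the three rules (ass1), (ass2), (ass3) together supply exactly the three conclusions required. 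Once these rules are correctly matched to the conditional form of (NG8), the rest of the argument is routine.
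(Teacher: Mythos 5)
Your proposal follows essentially the same route as the paper: form the Lindenbaum--Tarski quotient $\Lambda/{\equiv}$, check via (Ax), (cut), (itm1), (itm2) and ($\neg$-$\neg$) that the operations are well defined, verify that the quotient is an effect groupoid by reading each of (NG0)--(NG8) off the matching sequent rules, and conclude by evaluating a valid sequent under the canonical valuation. The paper dismisses the axiom verification as ``a transparent task,'' so your explicit rule-to-axiom correspondence (including the correct reading of $\chi(\phi,\psi)$ in (mds) and the conditional matching of (ass1)--(ass3) to (NG8)) only fills in detail the authors omit; the argument is correct.
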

\begin{proof}  As in \cite[Theorem 3.7]{suzuki} or in \cite[Theorem II.3]{TODDMA3}, 
we take the Lindenbaum-Tarski algebra ${\mathbf R}_{LEA}$ 
for  ${\mathbf L}_{LEA}$. That is, we take the quotient of  $\Lambda$ with respect
to the equivalence relation $\equiv$, defined by $\phi\equiv\psi$  $\ekviv$ 
$\phi\lcr \psi$ in ${\mathbf L}_{LEA}$ and 
$\psi\lcr \phi$ in ${\mathbf L}_{LEA}$. It is plain that 
$\equiv$ is really an equivalence relation. On this quotient
set $\Lambda/_{\equiv}$, we can define
\begin{itemize}
\item $0:=[\bot]_{\equiv}$, $[\phi]_{\equiv}':= [\neg\phi]_{\equiv}$, 
\item $1:=[\top]_{\equiv}$, $[\phi]_{\equiv}\cdot [\psi]_{\equiv}=%
[\phi\cdot \psi]_{\equiv}$. 
\end{itemize}

First, we have to verify that the definitions of $\cdot$ and $'$ do not 
depend on  representatives. Assume that $\phi\lcr \overline{\phi}$, 
$\overline{\phi}\lcr {\phi}$, $\psi\lcr \overline{\psi}$ and  
$\overline{\psi}\lcr {\psi}$. Using Proposition \ref{seqcDMP}, we get 
by ($\neg-\neg$) that $\neg\overline{\phi}\lcr \neg{\phi}$ and 
$\neg\phi\lcr \neg\overline{\phi}$. Hence 
$\neg\overline{\phi}\equiv \neg{\phi}$. Similarly, we have 
from (itm2) that $\phi\cdot \psi\lcr \phi\cdot \overline{\psi}$ and by (itm1) 
that $\phi\cdot \overline{\psi}\lcr \overline{\phi}\cdot \overline{\psi}$. Using 
(cut) we obtain that $\phi\cdot \psi\lcr \overline{\phi}\cdot \overline{\psi}$. 
By symmetric considerations we obtain that 
$\overline{\phi}\cdot \overline{\psi}\lcr \phi\cdot \psi$, i.e., 
$\phi\cdot \psi\equiv \overline{\phi}\cdot \overline{\psi}$.

It is a transparent task to show that the Lindenbaum-Tarski algebra  
${\mathbf R}_{LEA}=(\Lambda/_{\equiv};\cdot,',0,1)$ is 
 an effect groupoid.  

\end{proof} 

\section*{Acknowledgements}  
This is a pre-print of an article published as \newline 
I. Chajda, H.  L\"anger, J. Paseka, The groupoid-based logic for lattice effect algebras, in: Proceedings of the 47th IEEE International Symposium on Multiple-Valued Logic, Springer, (2017), 230--235, 
doi: 10.1109/ISMVL.2017.15.
The final authenticated version of the article is available online at: 
\newline 
https://ieeexplore.ieee.org/stamp/stamp.jsp?tp=\&\-arnumber=7964996.

All authors acknowledge the support by a bilateral project 
New Perspectives on Residuated Posets  financed by  
Austrian Science Fund (FWF): project I 1923-N25, 
and the Czech Science Foundation (GA\v CR): project 15-34697L. 
J.~Paseka acknowledges the financial support of the Czech Science Foundation
(GA\v CR) un\-der the grant  Algebraic, Many-valued and Quantum Structures for Uncertainty Modelling:  
project  15-15286S.


\begin{thebibliography}{99}

\bibitem{Abbott2} J.~C.~Abbott, Orthoimplication algebras, 
Studia Logica \textbf{35} (1976), 173--177.

\bibitem{Balbes}
{R. Balbes, P. Dwinger},  {Distributive lattices},  
University of Missouri Press, Columbia, 1974. 




\bibitem{Ch} {I.~Chajda}, 
{The propositional logic induced by means of basic algebras}, 
International \ Journal of\ Theoretical\ Physics  \textbf{54} (2015), 4306--4312.



\bibitem{CHK}
I.~Chajda, R.~Hala\v s, J.~K\"uhr, Many-valued quantum algebras, Algebra Universalis \textbf{60} (2009), 63--90.

\bibitem{CL}
I.~Chajda and H.~L\"anger, A representation of lattice effect algebras by means of right 
near semirings with involution, 
International \ Journal of\ Theoretical\ Physics, doi: 10.1007/s10773-016-3191-8.


\bibitem{TODDMA2}  I. Chajda, J. Paseka, {Set Representation of Partial Dynamic De Morgan Algebras},  
 In: Proc. 2016 IEEE 46th Internat. Symp. Multiple-Valued Logic,  
Springer,  (2016), 119--124, ISSN: 0195-623X/16.


\bibitem{TODDMA3}  I. Chajda, J. Paseka, {The poset-based logics for the De Morgan negation 
and set representation of partial dynamic De Morgan Algebras},  submitted to 
Journal of Multiple-Valued Logic and Soft Computing (2016). 

\bibitem{dpos} F. Chovanec, F. K\^o{}pka, D-posets, Mathematica Slovaca {\bf 44} (1994), 21--34.

\bibitem{dvurec} A. Dvure{\v{c}}enskij, S. Pulmannov\'{a}, New Trends in Quantum Structures, Kluwer Acad. Publ., Dordrecht/Ister Science, Bratislava 2000. 



\bibitem{FoBe} D.~J.~Foulis, M.~K. Bennett, Effect algebras and 
unsharp quantum logics,
Foundations of Physics \textbf{24} (1994), 1325--1346.


\bibitem{Ko} M.~Kola\v r\'i{}k, Independence of the axiomatic 
system for MV-algebras, Mathematica 
Slovaca {\bf 63} (2013), 1--4.


\bibitem{Riecanova}
Z.~Rie\v canov\'a, Generalization of blocks for D-lattices and lattice-ordered effect algebras,  
International \ Journal of\ Theoretical\ Physics, 
\ {\bf 39} (2000), 231--237.

\bibitem{suzuki} {T. Suzuki}, 
The De Morgan Negation on Bi-Approximation Semantics:
the dual representation of unbounded ortholattices, unpublished draft, 
http://www2.cs.cas.cz/\~{}{}suzuki/works/\-negbiapprox-woproofs.\-pdf, 2013.


\end{thebibliography}
\end{document}